\documentclass[11pt,a4paper]{article}
\usepackage[utf8]{inputenc}
\usepackage[english]{babel}
\usepackage{amsmath, amsthm, amssymb, amsfonts}
\usepackage{geometry}
\usepackage{hyperref}

\newtheorem{theorem}{Theorem}[section]

\newtheorem{proposition}[theorem]{Proposition}
\newtheorem{corollary}[theorem]{Corollary}
\newtheorem{defn}{Definition}
\newtheorem{obs}{Observation}

\usepackage{amsthm} 

\theoremstyle{definition} 

\title{Corrections to classical results on Independence and Covering numbers of the Splitting graph}

\author{J. Castro, J. Leaños and O. Rosario \\ \small Universidad Autónoma de Zacatecas \\ \small Universidad Autónoma de Guerrero 
\\ \small \texttt{castrosimonjair@gmail.com}}
\date{\today}

\begin{document}

		\maketitle
	
\begin{abstract}
The splitting graph $S(G)$ of a finite simple graph $G$ was introduced by Sampathkumar and Walikar in 1980~\cite{SW1980} and has been extensively studied in relation to graph invariants of $G$. In their original work, several formulas relating the independence number and the vertex cover number of $S(G)$ to the corresponding parameters of $G$ were stated and subsequently cited in the literature.  In this paper, we show that some of these classical equalities do not hold in general. We present explicit counterexamples disproving the published results concerning independence and vertex cover numbers of splitting graphs. Moreover, we establish the correct formulas and precisely characterize the cases in which the original statements are valid and those in which they fail. These results correct an error that has remained unnoticed for more than four decades and provide a clearer understanding of splitting graphs from the perspective of independence and vertex cover number.
\end{abstract}

\section{Introduction}
Throughout this paper, $G=(V,E)$ denotes a simple finite graph with vertex set $V$ and edge set $E$. Let $u$ and $v$
be distinct vertices of $V$. If $u$ and $v$ are adjacent, we simply write $uv\in E$. Similarly, we shall use 
$N_G(v)$ to denote the {\em neighborhood} of $v$ in $G$, that is, the set of all vertices of 
$G$ that are adjacent to $v$. The number $|N_G(v)|$ is the {\em degree} of $v$ and is denoted by $d_G(v)$. 
If $d_G(v)=1$, we will say that $v$ is a {\em pendant} vertex of $G$.  
 
The {\em splitting graph} $S(G)$ of $G$ is constructed by adding to $G$ a new vertex $v'$ for each $v\in V$ and connecting $v'$ to $u\in V$ if and only if $uv\in E$. We denote the vertex set (respectively, edge set) added to $G$ to form $S(G)$ as $V'$ (respectively, $E'$). Then the vertex set of $S(G)$ is the disjoint union of $V$ and $V'$, the edge set of $S(G)$ 
is the disjoint union of $E$ and $E'$, $|V|=|V'|$ and $2|E|=|E'|$. 

The notion of splitting graphs was introduced by E. Sampathkumar and H. B. Walikar in 1980~\cite{SW1980}, and since then, it has been the subject of extensive study. As we have mentioned, part of the initial research~\cite{SW1980} focused on the relationships between $S(G)$ and $G$ concerning graph invariants. The study of such relationships remains an important area of research. Some few examples of graph invariants that have been studied from this perspective include the energy, edge hub-integrity, Wiener index, Harary index, and chromatic number variants~\cite{indices,energy,hub,coloring}. Another common direction of research involves generalizing the concept in various ways. For instance, in~\cite{signed} splitting signed graphs were studied, in~\cite{m-shadow}, $m$-splitting graphs and $m$-shadow graphs were addresed, and in~\cite{line}, the line splitting graph of a graph was defined. The classification problem of splitting graphs based on their combinatorial properties is also of great interest~\cite{magic, signed, clasification}.     
  
Following~\cite{SW1980}, we use $\alpha_0,~\alpha_1, \beta_0$ and $\beta_1$  to denote  the vertex cover number, edge cover number, independence number, and matching number of a graph, respectively. An equivalent assertion of the following proposition can be found in~\cite{hub, SW1980}. 

\begin{proposition}\label{p:wrong}\textup{(Proposition~3 in~\cite{SW1980})}
For any graph $G$ with $n$ vertices, \\
(i) $\alpha_0(S(G))=n=\beta_0(S(G))$.\\
(ii) $\alpha_1(S(G))=2\alpha_1(G)$ and $\beta_1(S(G))=2\beta_1(G)$.
\end{proposition}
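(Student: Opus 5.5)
Because the abstract announces that some of these equalities are false, I will not try to prove Proposition~\ref{p:wrong} as stated. My plan is to prove each inequality that is true, reduce each claimed equality to an exact combinatorial formula, and read off from that formula when the equality holds and when it fails. The tools are the Gallai identities $\alpha_0+\beta_0=|V(H)|$ for every graph $H$, and $\alpha_1+\beta_1=|V(H)|$ for every $H$ without isolated vertices. Both apply to $H=S(G)$, which has $2n$ vertices. So (i) reduces to the single claim $\beta_0(S(G))=n$, and (ii) reduces to statements about $\beta_1(S(G))$.

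For (i), the set $V'$ is independent in $S(G)$, so $\beta_0(S(G))\ge n$ and hence $\alpha_0(S(G))\le n$. For the reverse direction, let $I$ be independent in $S(G)$ and write $A=I\cap V$ and $B=I\cap V'$. Then $A$ is independent in $G$. A vertex $v'$ can lie in $B$ exactly when $N_G(v)\cap A=\emptyset$, that is, when $v\notin N_G(A)$. This gives
\[
\beta_0(S(G))=n+\max_{A\ \text{independent in}\ G}\bigl(|A|-|N_G(A)|\bigr).
\]
Consequently $\beta_0(S(G))=n$ holds if and only if every independent set $A$ of $G$ satisfies $|N_G(A)|\ge |A|$, a Hall-type condition. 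The condition fails whenever $G$ has an isolated vertex. It also fails for the path $a\,b\,c$: there $\{a,c,a',c'\}$ is independent in $S(G)$, so $\beta_0(S(G))=4>3$. Both (i) equalities therefore fail in general, and the displayed formula is the corrected statement, with $\alpha_0(S(G))=2n-\beta_0(S(G))$.

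For (ii), a matching $M$ of $G$ doubles: each edge $uv\in M$ yields $uv'$ and $vu'$, and together these form a matching of $S(G)$ of size $2|M|$. Hence $\beta_1(S(G))\ge 2\beta_1(G)$. The upper bound should come from projecting a matching of $S(G)$ to $G$, and this is the step I expect to be the main obstacle. First I would try the triangle $K_3$, where $a b',\, b c',\, c a'$ is a matching of size $3>2=2\beta_1(K_3)$, so the equality already fails for odd cycles. The right replacement should therefore be a fractional or $2$-matching parameter of $G$, since projected edges may be used twice and the edges $vu'$ behave like a bipartite double cover. I will aim to prove that $\beta_1(S(G))$ equals the maximum size of a simple $2$-matching of $G$ (equivalently, $2$ times the fractional matching number); this is bounded above via a fractional vertex cover and below by the triangle-type construction. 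Finally, for $G$ without isolated vertices, the Gallai identity $\alpha_1(S(G))=2n-\beta_1(S(G))$ transfers this characterization to the edge cover number. In particular, $\alpha_1(S(G))=2\alpha_1(G)$ holds exactly when $\beta_1(S(G))=2\beta_1(G)$.
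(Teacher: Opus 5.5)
Your treatment of (i) matches the paper's. Splitting an independent set of $S(G)$ as $A=I\cap V$, $B=I\cap V'$ and noting that $B$ must avoid the copies of $N_G(A)$ is exactly the proof of Theorem~\ref{t:main}. Your path $a\,b\,c$ is the case $n=3$, $k=4$ of the pendant-vertex construction in Corollary~\ref{cor:any-n}, and you are right that connectivity is never used.

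On (ii) you go beyond the paper, which challenges only (i) and never examines (ii). Your matching $ab',\,bc',\,ca'$ in $S(K_3)$ is valid. So $\beta_1(S(K_3))=3\neq 2=2\beta_1(K_3)$, and by Gallai $\alpha_1(S(K_3))=3\neq 4=2\alpha_1(K_3)$. Part (ii) is therefore false as well, and your refutation is strictly stronger than the paper's.

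There is one genuine error in your proposed replacement for (ii). The maximum size of a \emph{simple} $2$-matching is not equivalent to $2\nu_f(G)$, and it is not $\beta_1(S(G))$. For $K_2$ it equals $1$, whereas $ab',\,ba'$ is a matching of size $2$ in $S(K_2)$; this even contradicts your own bound $\beta_1(S(G))\ge 2\beta_1(G)$. For $P_4$ you would get $3$ instead of $4$. The right parameter is the maximum $2$-matching with multiplicities allowed: $x\colon E\to\{0,1,2\}$ with $\sum_{e\ni v}x_e\le 2$. This equals $2\nu_f(G)$ by half-integrality of fractional matchings.

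With that fix, the projection step you were worried about is short.
\begin{itemize}
\item Upper bound: charge each matching edge $uv$, $uv'$ or $vu'$ of $S(G)$ to $uv\in E$. Each $v$ is met at most once in $V$ and at most once as $v'$, since $vv'\notin E(S(G))$. An edge $uv$ can be charged twice only through the pair $uv',vu'$. So the charges form a $2$-matching of $G$.
\item Lower bound: realize each weight-$2$ edge as $uv',vu'$. Realize each path or cycle $v_1v_2\cdots$ of weight-$1$ edges as $v_1v_2',\,v_2v_3',\dots$.
\end{itemize}

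The $V$--$V'$ edges of $S(G)$ form the bipartite double cover of $G$. König's deficiency formula there (the maximizing set may be taken independent) gives $2\nu_f(G)=n-\beta_0^*(G)$. Hence $\beta_1(S(G))=n-\beta_0^*(G)=\alpha_0(S(G))$. So (ii) holds exactly when $\nu_f(G)=\beta_1(G)$, for $G$ without isolated vertices; this includes all bipartite $G$.
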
 

While studying basic properties of splitting graphs, we observed discrepancies in the equalities stated in Proposition~\ref{p:wrong} $(i)$. A detailed analysis shows that these equalities are not valid in general. To the best of our knowledge, this issue has not been previously identified in the literature. The purpose of this paper is to establish the correct relationships between the independence and vertex cover numbers of a graph and those of its splitting graph, and to characterize precisely the cases in which the original statements hold and those in which they fail.

Our main results are stated in terms of the following parameter.

\begin{defn} Let $G=(V,E)$ be a finite simple graph of order $n\geq 2$.  We define
$$\beta_0^*(G):=\max \{|S|-|N(S)|~:~S \mbox{ is an independent set of } G\},$$
where $N(S)$ denotes the neighbourhood of $S$ in $G$.  
\end{defn}
  
We now present some properties of $\beta_0^*(G)$ that follow directly from its definition.

\begin{obs}\label{o:def} Let $G=(V,E)$ be a finite simple graph of order $n\geq 2$. The following hold:
\begin{itemize}
\item[$(i)$] Since the empty set is independent in $G$, then $\beta_0^*(G)\geq 0$.
\item[$(ii)$] If $E=\emptyset$, then $\beta_0^*(G)=n$; otherwise $\beta_0^*(G)\leq n-2$.    
\item[$(iii)$] $\beta_0^*(G)=0$ if and only if $|N(S)|\geq |S|$ for every independet set $S$ of $G$.
\item[$(iv)$] If $\beta_0(G)>n/2$, then $\beta_0^*(G)>0$. Moreover, there exist graphs with $\beta_0^*(G)>0$ while
$\beta_0(G)\leq n/2$.
\end{itemize} 
\end{obs}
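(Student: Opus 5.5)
The four items of Observation~\ref{o:def} follow directly from the definition of $\beta_0^*(G)$. I will check them one at a time, using a specific independent set (or a specific small graph) each time. Item (i) is immediate: for $S=\emptyset$ we have $N(S)=\emptyset$, so $|S|-|N(S)|=0$ is one of the values being maximized. Item (iii) is just a restatement. By (i), $\beta_0^*(G)=0$ means that every independent set $S$ has $|S|-|N(S)|\le 0$, i.e.\ $|N(S)|\ge |S|$. Conversely, if that inequality holds for every independent $S$, the maximum is at most $0$, and by (i) it equals $0$.

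For item (ii), suppose first that $E=\emptyset$. Then $V$ itself is independent and $N(V)=\emptyset$, which gives the value $n$. No set can exceed $|S|\le n$, so $\beta_0^*(G)=n$. Now suppose some edge $uv$ exists, and let $S$ be any independent set. If $S$ contains neither $u$ nor $v$, then $|S|\le n-2$, so $|S|-|N(S)|\le n-2$. Otherwise $S$ contains at least one of them, say $u$, and then $v\notin S$ by independence. In that case $v\in N(S)$, so $|N(S)|\ge 1$. Moreover $S$ and $N(S)$ are disjoint subsets of $V$ (a vertex of $S$ has no neighbour in $S$). Hence $|S|\le n-|N(S)|$, and
\[
|S|-|N(S)|\le n-2|N(S)|\le n-2 .
\]
Taking the maximum over $S$ gives $\beta_0^*(G)\le n-2$. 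The point that needs care is the disjointness $S\cap N(S)=\emptyset$; it holds precisely because $S$ is independent.

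For item (iv), let $S$ be a maximum independent set, so $|S|=\beta_0(G)>n/2$. Since $S\cap N(S)=\emptyset$, we get $|N(S)|\le n-|S|<n/2<|S|$. Therefore
\[
\beta_0^*(G)\ge |S|-|N(S)|>0 .
\]
For the second claim I only need one example. The main task is choosing a small graph in which some independent set has few neighbours but $\beta_0(G)\le n/2$. I will take $G$ to be the disjoint union of $K_2$ and the path $P_3$ on $a-b-c$, so $n=5$. Here $\beta_0(G)=1+2=3>5/2$, which fails the requirement, so I enlarge the dense part and use $K_4\cup P_3$ instead, with $n=7$. Then $\beta_0(G)=1+2=3\le 7/2$. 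The independent set $S=\{a,c\}$ has $N(S)=\{b\}$, so $|S|-|N(S)|=1$ and $\beta_0^*(G)\ge 1>0$. This example would be verified explicitly in the write-up. If a connected example is preferred, attach one vertex of $K_4$ to $b$. Then $N(S)$ is still $\{b\}$, and $\beta_0(G)$ stays at $3$: the independent sets of $P_3$ have size at most $2$, and at most one vertex of $K_4$ can be added.
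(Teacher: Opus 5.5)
Your proof is correct and follows the same direct-from-the-definition verification the paper intends; the paper gives no separate proof. The disjointness $S\cap N(S)=\emptyset$ for independent $S$ gives both $\beta_0^*(G)\le n-2$ and the first part of (iv), and your example $K_4\cup P_3$ (and its connected variant with an edge from $K_4$ to $b$) has $\beta_0(G)=3\le 7/2$ and $\beta_0^*(G)\ge 1$, as required.
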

    
 Since the independence and vertex cover numbers are additive over connected components, we restrict attention in this note to connected graphs.
    
\begin{theorem}\label{t:main} Let $G=(V,E)$ be a finite simple connected graph of order $n\geq 2$. Then
$\beta_0(S(G)) = n+\beta^*_0(G)$. 
\end{theorem}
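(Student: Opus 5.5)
We prove the two inequalities $\beta_0(S(G))\ge n+\beta_0^*(G)$ and $\beta_0(S(G))\le n+\beta_0^*(G)$ separately. The structural facts used throughout are that $V'$ is an independent set of $S(G)$, and that a vertex $v\in V$ is adjacent in $S(G)$ exactly to $N_G(v)\subseteq V$ and to $\{u' : u\in N_G(v)\}\subseteq V'$. Hence, for $S\subseteq V$, the set of vertices of $V'$ adjacent to some vertex of $S$ is precisely $N(S)':=\{u' : u\in N_G(S)\}$, and $|N(S)'|=|N_G(S)|$.

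\emph{Lower bound.} Let $S$ be an independent set of $G$ attaining $\beta_0^*(G)$. Consider
$$I:=S\cup\bigl(V'\setminus N(S)'\bigr).$$
The set $S$ is independent in $G$, hence in $S(G)$, since $S(G)[V]=G$. The set $V'$ is independent. By the remark above, no vertex of $V'\setminus N(S)'$ is adjacent to a vertex of $S$. So $I$ is independent in $S(G)$, and
$$|I|=|S|+n-|N_G(S)|=n+\beta_0^*(G).$$
Note that for $S=\emptyset$ this simply recovers $V'$.

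\emph{Upper bound.} Let $I$ be any independent set of $S(G)$, and write $I=A\cup B'$ with $A\subseteq V$ and $B'\subseteq V'$. Since $A\subseteq I$, the set $A$ is independent in $S(G)[V]=G$. Since $I$ is independent, $B'$ avoids $N(A)'$, so
$$|B'|\le n-|N_G(A)|.$$
Therefore
$$|I|\le |A|+n-|N_G(A)|\le n+\beta_0^*(G),$$
where the last step is the definition of $\beta_0^*$ applied to the independent set $A$. Taking $I$ to be a maximum independent set gives the upper bound.

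Combining the two inequalities yields the equality. The only step needing care is the identification of the $V'$-neighbourhood of $A$ with $N_G(A)'$; the rest is bookkeeping. Connectivity and $n\ge 2$ play no real role beyond keeping Observation~\ref{o:def} consistent, so we would remark that the argument holds verbatim for any graph.
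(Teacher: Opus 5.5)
Your proof is correct and follows the paper's argument essentially verbatim. You use the same independent set $S\cup(V'\setminus N(S)')$ for the lower bound, and the same split $I=A\cup B'$ with $B'$ avoiding $N(A)'$ for the upper bound. Your uniform handling of $S=\emptyset$ and $A=\emptyset$ simply absorbs the paper's separate case distinctions, and your remark that connectivity is never used is accurate.
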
 
	
The following proposition is an immediate consequence of the well-known Gallai's identity~\cite{galai} 
and the definition of $S(G)$.

\begin{proposition}\label{p:gallai} If $G$ is a finite simple graph of order $n\geq 2$, then 
$\alpha_0(S(G))+\beta_0(S(G))=2n$.
\end{proposition}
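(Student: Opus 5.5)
The plan is to apply the classical Gallai identity $\alpha_0(H)+\beta_0(H)=|V(H)|$, valid for every finite simple graph $H$, with $H=S(G)$. The only input we need from the definition of the splitting graph is its order: by construction the vertex set of $S(G)$ is the disjoint union of $V$ and $V'$ with $|V|=|V'|=n$, so $|V(S(G))|=2n$. Substituting $H=S(G)$ and $|V(H)|=2n$ into the identity gives the statement.

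To keep the argument self-contained, I will recall why the identity holds, which is a direct complementation argument. Let $H$ be any graph with vertex set $W$. A set $I\subseteq W$ is independent if and only if $W\setminus I$ is a vertex cover, since both conditions say that no edge has both endpoints in $I$. Hence $I\mapsto W\setminus I$ is a bijection between the independent sets and the vertex covers of $H$, and it reverses cardinalities via $|W\setminus I|=|W|-|I|$. Taking $I$ to be a maximum independent set gives $\alpha_0(H)\le |W|-\beta_0(H)$. Taking a minimum vertex cover $C$, whose complement is independent, gives $\beta_0(H)\ge |W|-\alpha_0(H)$. Together these yield $\alpha_0(H)+\beta_0(H)=|W|$.

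Unlike the identity $\alpha_1+\beta_1=|W|$ for edge covers and matchings, this vertex identity needs no hypothesis excluding isolated vertices. So it applies to $S(G)$ with no extra care, even though $S(G)$ always has edges when $G$ does, and even if $G$ is edgeless. I do not expect any real obstacle: the proposition is a one-line consequence once $|V(S(G))|=2n$ is noted. The only point to state explicitly is that the vertex version of Gallai's identity is being used, not the edge version, so that no connectivity or minimum-degree assumption is needed.
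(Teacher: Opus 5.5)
Your proposal is correct and follows the paper's argument: the paper also derives the proposition directly from Gallai's identity $\alpha_0(H)+\beta_0(H)=|V(H)|$ applied to $H=S(G)$, using $|V(S(G))|=|V|+|V'|=2n$. Your complementation proof of the identity, and your remark that the vertex version needs no hypothesis on isolated vertices, just make explicit what the paper cites.
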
 

Combining  Theorem~\ref{t:main} and Proposition~\ref{p:gallai} we have the following.
	
\begin{corollary}\label{c:alpha} Let $G=(V,E)$ be a finite simple connected graph of order $n\geq 2$.  Then
$\alpha_0(S(G)) = n-\beta^*_0(G)$. 
\end{corollary}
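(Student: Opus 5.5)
The corollary follows from Theorem~\ref{t:main} and Proposition~\ref{p:gallai} by direct substitution; no new combinatorial argument is needed. The plan is to use Proposition~\ref{p:gallai} to write the vertex cover number of $S(G)$ in terms of its independence number, and then apply Theorem~\ref{t:main}, which holds under the same hypotheses as the corollary ($G$ finite, simple, connected, of order $n\geq 2$).

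First, I would recall why Proposition~\ref{p:gallai} holds. The graph $S(G)$ has vertex set $V\cup V'$, a disjoint union with $|V|=|V'|=n$, so $S(G)$ has exactly $2n$ vertices. Gallai's identity states that in any graph, a set of vertices is independent exactly when its complement is a vertex cover. Applied to $S(G)$, this gives $\alpha_0(S(G))+\beta_0(S(G))=|V(S(G))|=2n$.

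Second, since $G$ is connected of order $n\geq 2$, Theorem~\ref{t:main} gives $\beta_0(S(G))=n+\beta_0^*(G)$. Substituting into the identity from Proposition~\ref{p:gallai} yields
$$\alpha_0(S(G)) = 2n-\beta_0(S(G)) = 2n-\bigl(n+\beta_0^*(G)\bigr) = n-\beta_0^*(G),$$
which is the claimed formula.

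There is no real obstacle in this step. All the substantive content sits in Theorem~\ref{t:main}, which we are allowed to assume. The only point to check is that the hypotheses match: connectedness is required for Theorem~\ref{t:main}, and Proposition~\ref{p:gallai} imposes no extra conditions, so the two results combine without further case analysis.
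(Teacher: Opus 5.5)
Your proposal is correct and follows the paper's own route exactly: the paper obtains this corollary by combining Theorem~\ref{t:main} with Proposition~\ref{p:gallai}, giving $\alpha_0(S(G)) = 2n-\beta_0(S(G)) = n-\beta_0^*(G)$. Your justification of Proposition~\ref{p:gallai} is also sound: in the $2n$-vertex graph $S(G)$, complements of independent sets are exactly the vertex covers.
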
 

The following complete characterization of the graphs attaining equalities in Proposition~\ref{p:wrong} $(i)$ follows directly from  Observation~\ref{o:def}~($iii$), Theorem~\ref{t:main}, and Corollary~\ref{c:alpha}.

\begin{corollary}\label{c:right} Let $G=(V,E)$ be a finite simple connected graph of order $n\geq 2$.  Then
$\alpha_0(S(G))=n=\beta_0(S(G))$ if and only if  $|N(S)|\geq |S|$ for every independet set $S$ of $G$.
\end{corollary}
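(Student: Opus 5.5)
Corollary~\ref{c:right} only needs to be assembled from results already in hand. I would prove the two directions together, using the chain of equivalences that Theorem~\ref{t:main} and Corollary~\ref{c:alpha} provide. Both apply because $G$ is connected of order $n\ge 2$, and together they give
$$\beta_0(S(G))=n+\beta_0^*(G), \qquad \alpha_0(S(G))=n-\beta_0^*(G).$$

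The first step is to note that each of the two target equalities is individually equivalent to $\beta_0^*(G)=0$. Indeed, $\beta_0(S(G))=n$ holds if and only if $\beta_0^*(G)=0$, and likewise $\alpha_0(S(G))=n$ holds if and only if $\beta_0^*(G)=0$. Hence the conjunction $\alpha_0(S(G))=n=\beta_0(S(G))$ is also equivalent to $\beta_0^*(G)=0$. Here one could alternatively get one equality from the other via Proposition~\ref{p:gallai}, since $\alpha_0(S(G))+\beta_0(S(G))=2n$; I would mention this only as a consistency check.

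The second step is to invoke Observation~\ref{o:def}~$(iii)$. It states that $\beta_0^*(G)=0$ precisely when $|N(S)|\ge |S|$ for every independent set $S$ of $G$. Chaining the two equivalences yields the corollary.

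There is no real obstacle here. The only care needed is in justifying Observation~\ref{o:def}~$(iii)$, if one wants to be self-contained. Since $\emptyset$ is independent, $\beta_0^*(G)\ge 0$, so $\beta_0^*(G)=0$ is the same as $|S|-|N(S)|\le 0$ for every independent $S$, which is exactly the stated condition. All the substantive work lies in Theorem~\ref{t:main}, which we are allowed to assume.
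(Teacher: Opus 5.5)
Your proposal is correct and follows the paper's own route exactly. Theorem~\ref{t:main} and Corollary~\ref{c:alpha} reduce each of $\beta_0(S(G))=n$ and $\alpha_0(S(G))=n$ to $\beta_0^*(G)=0$, and Observation~\ref{o:def}~$(iii)$ (justified, as you note, by $\beta_0^*(G)\geq 0$ via the empty set) turns that into the condition $|N(S)|\geq |S|$ for every independent set $S$.
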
 

Our next corollary follows readily from the final paragraph of the proof of Theorem~\ref{t:main}.

\begin{corollary}\label{c:sets} Let $G=(V,E)$ be a finite simple connected graph of order $n\geq 2$.  Then
$X$ is a maximum independent set in $S(G)$ if and only if $\beta^*_0(G)=|A|-|N(A)|$, where $A:=X\cap V$.
\end{corollary}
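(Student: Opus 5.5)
The plan is to compare $|X|$ with $n+\beta_0^*(G)$, which is $\beta_0(S(G))$ by Theorem~\ref{t:main}. Throughout, $X$ denotes an independent set of $S(G)$, and we put $A:=X\cap V$ and $B:=X\cap V'$.

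The key structural fact comes from the definition of $S(G)$: a vertex $v'\in V'$ is adjacent to $u\in V$ exactly when $uv\in E$. Hence $v'$ has a neighbour in $A$ if and only if $v\in N(A)$, where $N(A)$ is the neighbourhood in $G$. Moreover, $V'$ is independent in $S(G)$. So, for any independent $A\subseteq V$ of $G$, the independent sets $X$ of $S(G)$ with $X\cap V=A$ are exactly $A\cup B$ with $B\subseteq \{v' : v\notin N(A)\}$. This gives the bound
$$|X|\le |A|+n-|N(A)| = n+\bigl(|A|-|N(A)|\bigr)\le n+\beta_0^*(G).$$
Equality in the first step holds precisely when $B=\{v':v\notin N(A)\}$. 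Equality in the second step holds precisely when $|A|-|N(A)|=\beta_0^*(G)$.

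By Theorem~\ref{t:main}, $X$ is maximum exactly when both inequalities are equalities. The forward direction is then immediate: if $X$ is maximum, then $|A|-|N(A)|=\beta_0^*(G)$. For the converse, suppose $|A|-|N(A)|=\beta_0^*(G)$. We also need $X$ to contain every $v'$ with $v\notin N(A)$, since otherwise adding such a vertex keeps $X$ independent and enlarges it. The statement should therefore be read for maximal independent sets $X$; equivalently, $X=A\cup\{v':v\notin N(A)\}$. Under that reading, both equalities hold, so $|X|=n+\beta_0^*(G)=\beta_0(S(G))$ and $X$ is maximum.

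The only real subtlety is this converse. The identity $|A|-|N(A)|=\beta_0^*(G)$ alone does not force $X$ to be maximum unless $X$ is maximal, for example $X=A$ itself. I would make the maximality hypothesis explicit and then use the maximality argument above to pin down $B$. Everything else is the counting inequality above, which mirrors the final paragraph of the proof of Theorem~\ref{t:main}.
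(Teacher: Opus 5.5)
Your argument is correct and is essentially the paper's. The paper derives this corollary from the final paragraph of the proof of Theorem~\ref{t:main}, namely from the chain $|X|\le |A|+n-|N(A)|\le n+\beta_0^*(G)$ together with $\beta_0(S(G))=n+\beta_0^*(G)$, and your equality analysis of that chain is exactly what is needed.

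Your caveat is also right, and the paper does not address it. As literally stated, the ``if'' direction fails for every $G$: take $X=A$ with $A$ an independent set attaining $\beta_0^*(G)$. Then $n+\beta_0^*(G)-|X|=n-|N(A)|>0$, so $X$ is not maximum. A hypothesis such as maximality of $X$, or $X\cap V'=\{v':v\notin N(A)\}$ with $X$ independent, is therefore genuinely required.
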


\section{Proof of Theorem~\ref{t:main}}
According to the statement of Theorem~\ref{t:main}, we can assume that $G=(V,E)$ is a finite simple connected graph of order 
$n=|V|\geq 2$. Then, $S(G)=(V\cup V', E\cup E')$ and $|V|=|V'|=n$, where $V'$ and $E'$ are the sets of added vertices and edges, respectively. We denote the vertex in $V'$ corresponding to $v \in V$ as $v'$  (and vice versa).
 
First we show that $\beta_0(S(G)) \geq n+\beta^*_0(G)$. From the definition of $S(G)$, we know that $V'$ is an independent set of $S(G)$ of order $n$. Then, we may assume that $\beta^*_0(G)>0$, as otherwise we are done, because 
$\beta_0(S(G)) \geq |V'|=n+0$. 

Let $S\subseteq V$ be an independent set of $G$ such that $\beta^*_0(G)=|S|-|N(S)|$. 
We observe that if $N':=\{v'\in V'~:~v\in N(S)\}$, then the set $X:=S\cup (V'\setminus N')$ is an independent set of $S(G)$.
Since $|N'|=|N(S)|$, then $|X|=|S|+n-|N(S)|=n+\beta^*_0(G)$, as required. 

We now show that $\beta_0(S(G)) \leq n+\beta^*_0(G)$. Let $X$ be a maximum independent set of $S(G)$; we must
prove $|X|\leq n+\beta^*_0(G)$. Define $A:= X \cap V$ and $B':= X \cap V'$. If $A=\emptyset$, then $X=B'\subseteq V'$ and 
$|X|\leq |V'|=n$, so the bound holds. Thus assume $A\neq \emptyset$ and let $N':=\{v'\in V'~:~v\in N(A)\}$. From the 
construction of $S(G)$ and the independence of $A\cup B'$ we have $B'\cap N'=\emptyset$, hence  
$|B'|\leq n - |N'|$. Since $|N'|=|N(A)|$, we have that $|X|=|A|+|B'|\leq |A|+n-|N(A)|\leq n+\beta^*_0(G)$, as required.

\section{Counterexamples} 
Since $G$ is connected, then $0\leq \beta^*_0(G)\leq n-2$ by Observation~\ref{o:def}~$(i)$-$(ii)$.
From this fact and Theorem~\ref{t:main}, it follows that 
$$n\leq \beta_0(S(G))\leq 2n-2.$$

\begin{corollary} \label{cor:any-n} Let $n\geq 2$ be an integer. For any $k\in \{n,\ldots , 2n-2\}$ there exists a connected graph  of order $n$ such that $\beta_0(S(G))=k$ (and hence $\alpha_0(S(G))=2n-k$).
\end{corollary}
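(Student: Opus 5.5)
By Theorem~\ref{t:main}, it suffices to exhibit, for each $j\in\{0,1,\ldots,n-2\}$, a connected graph $G$ of order $n$ with $\beta^*_0(G)=j$. Then $\beta_0(S(G))=n+j$ ranges over all $k\in\{n,\ldots,2n-2\}$. The statement about $\alpha_0(S(G))=2n-k$ then follows immediately from Proposition~\ref{p:gallai} (equivalently, Corollary~\ref{c:alpha}).

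The natural family is complete split graphs. Fix $j$ and let $s=j+1$, so $1\le s\le n-1$. Let $G_s$ consist of an independent set $I$ of size $n-s$ and a clique $K$ of size $s$, with every vertex of $I$ adjacent to every vertex of $K$. This graph is connected since $n-s\ge1$ and $s\ge1$. I will instead aim for $\beta^*_0(G_s)=n-2s$, which forces adjusting the parametrization; concretely I take $G$ with $s$ clique vertices and $n-s$ independent ones, and compute $\beta_0^*$ as follows.

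An independent set $S$ of $G_s$ is of one of two kinds. Either $S$ is a single clique vertex $\{x\}$ together with nothing else, giving $|S|-|N(S)|=1-(n-1)\le 0$. Or $S\subseteq I$ is nonempty, giving $N(S)=K$ and value $|S|-s\le n-2s$, with equality at $S=I$. Together with the empty set, this gives $\beta^*_0(G_s)=\max\{0,n-2s\}$.

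This only yields values of the same parity as $n$, so the family must be enriched to hit every $j$. The main obstacle is exactly this: achieving all integers $j$ between $0$ and $n-2$, not just those congruent to $n$ mod $2$. My first attempt is a double star or star-like tree. Take a vertex $c$ adjacent to $j+1$ leaves, together with a path $P$ on the remaining $n-j-2$ vertices attached to $c$. Choosing $S$ to be the leaves gives $j+1-1=j$. For the upper bound I would check that any independent $S$ satisfies $|S|-|N(S)|\le j$. Split $S$ into its part among the leaves at $c$ (contributing at most $j+1$, with $c\in N(S)$ whenever this part is nonempty) and its part on the path. By Observation~\ref{o:def}~$(iii)$, the path part contributes at most $0$ after the standard matching argument on a path with one end joined to $c$. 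Care is needed with parity of the path length, because a path with an odd number of vertices has surplus $1$. If that case causes trouble, I would replace the path by a clique $K_{n-j-2}$ joined to $c$, where every independent set has at most one vertex and clearly has $|N(S)|\ge|S|$. The extreme cases $j=0$ (take $K_n$) and $j=n-2$ (the star $K_{1,n-1}$) serve as sanity checks.
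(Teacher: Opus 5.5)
Your final construction, a clique $K_{n-j-2}$ joined to $c$ plus $j+1$ pendant vertices at $c$, is exactly the paper's graph: $c$ and the clique form $K_{2n-k-1}$ carrying $k-n+1$ pendants, with $k=n+j$. The step that fails is your claim that the clique part ``clearly has $|N(S)|\ge|S|$'' and so adds nothing beyond the pendants. Take $S=P\cup\{q\}$, with $P$ the pendant set and $q$ any vertex of the added clique. Then $|N(S)|=n-j-2$ (the rest of the clique together with $c$), so $|S|-|N(S)|=(j+2)-(n-j-2)=2j+4-n$. This is at most $j$ only when $j\le n-4$. At $j=n-3$ the ``clique'' is $K_1$, the extra vertex is just one more pendant at $c$, and $G=K_{1,n-1}$ with $\beta_0^*(G)=n-2\neq j$. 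Your path variant fails outright, not merely for lack of care. If $n-j-2$ is odd, add to $P$ the path vertices in odd positions, counted from the end attached to $c$; this independent set has surplus $j+1$.

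This gap cannot be closed, and the paper's own proof has the same defect: for $k=2n-3$ its graph, $K_2$ with $n-2$ pendants, is again the star. In fact no connected graph of order $n\ge 4$ has $\beta_0^*(G)=n-3$. Suppose an independent $S$ had $|S|-|N(S)|=n-3\ge 1$. Then $S\neq\emptyset$, and connectivity gives $N(S)\neq\emptyset$. Since $|S|+|N(S)|\le n$, this forces $|N(S)|=1$ and $|S|=n-2$. Every vertex of $S$ is then a pendant at the unique vertex $u$ of $N(S)$. The one remaining vertex has no neighbour in $S$, so it too is a pendant at $u$. Hence $G=K_{1,n-1}$, and $\beta_0^*(K_{1,n-1})=n-2$, a contradiction. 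So $\beta_0(S(G))=2n-3$ is never attained for $n\ge 4$, and the statement is false as written. For $n=4$, the six connected graphs of order $4$ all have $\beta_0^*\in\{0,2\}$, so $k=5$ is impossible. What your construction, like the paper's, does establish is that every $k\in\{n,\dots,2n-4\}\cup\{2n-2\}$ is attained. By the argument above, this is exactly the set of attainable values when $n\ge 4$.
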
	
 
\begin{proof} Let $k\in \{n, \ldots , 2n-2\}$. By Theorem~\ref{t:main}, it suffices to exhibit a connected graph $G_k$ of order $n$ with $\beta_0^*(G_k)=k-n$. 

If $k=n$, take $G_n$ as the complete graph $K_n$. Then $\beta^*_0(G_n)=0=n-n$. For $k\in \{n+1, \ldots , 2n-2\}$, let $G_k$ be the graph obtained from $K_{2n-k-1}$ by attaching a set $P$ of $k-n+1$ pendant vertices to a single vertex $u$ of $K_{2n-k-1}$. It is not hard to see that $\beta^*_0(G_k)=|P|-|N(P)|=(k-n+1)-1=k-n$, as required. 
\end{proof}
		
These constructions explicitly show that the equalities in Proposition~\ref{p:wrong} $(i)$ do not hold in general, and that  
$\beta_0(S(G))$ attains every value in the permissible range.
	
\section*{Acknowledgements}

The first author (Jair Castro Simon, CVU: 672561) is a postdoctoral fellow at the Universidad Autónoma de Zacatecas “Francisco García Salinas”, supported by the Secretaría de Ciencia, Humanidades, Tecnología e Innovación (SECIHTI) under the program “Estancias Posdoctorales por México Convocatoria 2025” (Estancia Posdoctoral Inicial), for the project: “Estudio estructural del diferencial de gráficas bajo la acción de operadores combinatorios clásicos” (No. 12977270).


\begin{thebibliography}{9}




\bibitem{indices} Francis Joseph H. Campe\~na, Ma. Christine G. Egan and John Rafael M. Antalan,
On the Weiner and Harary index of splitting graphs, {\em European Journal of pure and  applied mathematics},
Vol. 15, No. 2, 2022, 602--619.


\bibitem{energy}
Zheng-Qing Chu, Saima Nazeer, Tariq Javed Zia, Imran Ahmed and Sana Shahid, Some new results on various graph energies of the splitting graph. {\em Journal of Chemestry}, (2019). https://doi.org/10.1155/2019/7214047

\bibitem{galai}
T. Gallai, \"Uber extreme Punkt-und Kantenmengen. {\em Ann. Univ. Sci. Budapest, E\"otv\"os Sect. Math.}, {\bf 2} (1959), 133--138.

\bibitem{hub} Sultan Senan Mahde and Veena Mathad,
Some results on the edge hub-integrity of graphs, {\em Asia Pacific Journal of Mathematics},
 Vol. 3, No. 2 (2016), 173--185

\bibitem{m-shadow} T. Manjula and R. Rajeswari, Dominator chromatic number of $m$-splitting graph and 
$m$-shadow graph of path graph, {\em International Journal of Biomedical Engineering and Technology},
Vol. 27, No. 1--2, (2018), 100--113. https://doi.org/10.1504/IJBET.2018.093089

\bibitem{magic} S. Muthukkumar, K. Rajendran,
Generation of anti-magic graphs, {\em Int. J. Anal. Appl.} (2025), 23--30. https://doi.org/10.28924/2291-8639-23-2025-30 

\bibitem{coloring} D. Muthuramakrishnan and G. Jayaraman,
Total coloring of splitting graph of path, cycle and star graphs, {\em International Journal of Mathematics and its Applications}, 6 (1--D) (2018), 659 -- 664.

\bibitem{line} V. R. Kulli and M. S. Biradar,
The line splitting graph of a graph, {\em Acta Ciencia Indica},
 Vol. XXVIII M, No. 3, (2002) (2016), 317--322. 

\bibitem{signed} Sandeep Kumar and Deepa Sinha,
Spectral analysis of splitting signed graph, {\em European Journal of pure and  applied mathematics}, 
Vol. 17, No. 1, 2024, 504--518.

\bibitem{clasification} R. Ponraj and S. Sathish Narayanan,
Difference cordiality of come derived graphs, {\em International J.Math. Combin.} Vol. 4 (2013), 37--48.
 
\bibitem{SW1980} E. Sampathkumar and H. B. Walikar, On the splitting graph of a graph, {\em J. Karnatak Univ. Sci.}, 25 (1980) 13--16.
		
	\end{thebibliography}
\end{document}